\newcommand{\E}{{\mathbb E}}
\newcommand{\N}{{\mathbb N}} 
\newcommand{\Z}{{\mathbb Z}}
\newcommand{\CC}{{\mathcal C}} 
\newcommand{\CY}{{\mathcal Y}}
\newcommand{\inv}{^{-1}}
\newcommand{\ve}{\varepsilon}
\renewcommand{\d}{{\mathrm{d}}}
\newcommand{\ds}{\overline{\d}}
\newcommand{\wt}{\widetilde}
\newcommand{\bPhi}{{\pmb{\Phi}}}
\newcommand{\one}{{\pmb{1}}}
\newtheorem{lemma}{Lemma}
\newtheorem{proposition}[lemma]{Proposition}
\newtheorem{theorem}[lemma]{Theorem}
\newtheorem{corollary}[lemma]{Corollary}
\theoremstyle{definition}
\newtheorem*{definition*}{Definition}
\begin{document}

\begin{frontmatter}[classification=text]

\title{A Short Proof of a  Conjecture of Erd\H os\\
Proved by Moreira, Richter and Robertson} 

\author[bh]{Bernard Host}

\begin{abstract}
We give a short proof of  a sumset conjecture of Erd\H os, recently proved by Moreira, Richter and Robertson: every subset of the integers of positive density contains the sum of two infinite sets.  
The proof is  written in the framework of classical ergodic theory.
\end{abstract}
\end{frontmatter}

\section{Introduction}

In this note we give a short proof of a result of Moreira, Richter and Robertson, conjectured by Erd\H os:
\begin{theorem}[\cite{MRR}]
\label{th:main}
If $A\subset \N$ has positive density then there exist infinite subsets $B$ and $C$ of $\N$ with $A\supset B+C$.
\end{theorem}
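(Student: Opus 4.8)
The plan is to translate the combinatorial statement into ergodic language via Furstenberg's correspondence principle, and then to construct the two infinite sets by an interleaved induction whose engine is a recurrence property of a well-chosen shift-invariant measure. First I would set $x_0=\one_A\in\{0,1\}^{\Z}$, let $X$ be the closure of its orbit under the shift $T$, and let $U=\{y:y_0=1\}$ be the defining clopen set, so that $n\in A\iff T^nx_0\in U$. Choosing a F\o lner sequence $\bPhi$ along which the density of $A$ is attained, I would pass to a weak-$*$ limit of the empirical measures $\frac{1}{|\bPhi_N|}\sum_{n\in\bPhi_N}\delta_{T^nx_0}$ to obtain a $T$-invariant probability measure $\mu$ with $\mu(U)=\ds(A)>0$. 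The goal then becomes: produce infinite sets $B=\{b_1<b_2<\cdots\}$ and $C=\{c_1<c_2<\cdots\}$ with $T^{c_j}(T^{b_i}x_0)\in U$ for all $i,j$.

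Next I would build $B$ and $C$ by a diagonal induction, adding one element to each at every stage. The inductive hypothesis I would carry is that, after $n$ steps, all sums $b_i+c_j$ with $i,j\le n$ lie in $A$, that the points $T^{b_i}x_0$ cluster near a distinguished point $x_*$ selected from the support of $\mu$, and --- crucially --- that a reservoir of candidate future times, of positive density, still satisfies every currently active constraint. Because $U$ is clopen, a single time $c$ with $T^cx_*\in U$ automatically sends all but finitely many of the $T^{b_i}x_0$ into $U$; the finitely many exceptional indices are pinned down by hand, while the tail is governed by the limit point $x_*$.

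The engine of the induction is a measure-theoretic recurrence statement: for any finite family of points lying in $U$ and close enough to $x_*$, the set of common return times $\{n:T^nz\in U\text{ for every }z\text{ in the family}\}$ has positive density, and this density does not collapse as the family grows. I would extract this from $\mu$ by a Lebesgue-density and conditional-expectation argument, passing to conditional measures concentrated near $x_*$ so that the relevant intersections remain of positive measure; genericity, inherited from $x_0$, then converts positive measure into a positive density of genuine integer times, so the reservoir never empties.

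I expect the main obstacle to be precisely this preservation of a nonempty reservoir through infinitely many intersections. The constraints are two-sided, since every pair $b_i+c_j$ must land in $A$, yet the robustness supplied by an open set is only one-sided: proximity to $x_*$ controls the tail but not the finitely many small indices, and not uniformly. Reconciling these forces an asymmetric, carefully ordered treatment of $B$ and $C$ together with a quantitative lower bound on the reservoir's density that does not degrade from one step to the next. Making that bound uniform, so that the diagonal selection can continue indefinitely and yield genuinely infinite $B$ and $C$, is the heart of the argument.
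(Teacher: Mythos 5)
You have correctly reconstructed the combinatorial half of the argument: your correspondence-principle setup and your interleaved induction --- where a single time $c$ with $T^c x_*\in U$ handles all but finitely many of the points $T^{b_i}x_0$ because $U$ is open --- is exactly the content and proof of Theorem~\ref{th:one} of the paper. The gap is your ``engine''. The reservoir whose positivity you must maintain is $\bigcap_{i\le n}(A-b_i)\cap L$, where $L=\{\ell\colon T^\ell x_*\in U\}$, and its density is a \emph{global} asymptotic quantity: it equals $\nu\bigl(\bigcap_{i\le n}(T^{-b_i}U\times U)\bigr)$ for the joining $\nu$ obtained as a weak* limit of empirical measures of the pair $(x_0,x_*)$. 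Proximity of $T^{b_i}x_0$ to $x_*$ constrains only finitely many coordinates (a finite time window), so no Lebesgue-density or conditional-measure argument localized near $x_*$ can control it; in a system that is not equicontinuous, points that are close at time $0$ may separate under iteration, and in a mixing example the common return set of $k$ nearby points can have density decaying like $\mu(U)^k$, or, for badly chosen points, finite intersections can be empty outright. Indeed ``$x_*$ in the support of $\mu$'' is already fatally weak: the support of a Bernoulli measure on the full shift contains the zero sequence, for which $L=\emptyset$ and the reservoir dies at the first step. What is actually needed is (a) a uniform lower bound $\nu(T^{-m_i}U\times U)\ge\ve$ along a sequence $m_i\to\infty$ with $T^{m_i}x_0\to x_*$, (b) Bergelson's intersection lemma (Lemma~\ref{lem:Ber}) to pass to a subsequence of the $m_i$ along which all \emph{finite} intersections have positive $\nu$-measure (positivity of finite intersections is not automatic from (a)), and (c) genericity of the pair $(x_0,x_*)$ for $\nu$ to convert measure into density. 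Item (a) is the theorem's real content, and your proposal asserts it rather than proves it --- as you yourself concede in your final paragraph.

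The paper obtains (a) by spectral, not local, tools, and this is where all the structure you omitted enters. One first replaces $\mu$ by an ergodic component and re-chooses the F\o lner sequence so that $x_0$ remains generic; one then makes the Kronecker factor map $\pi\colon X\to Z$ \emph{continuous} by passing to an extension (Proposition~\ref{prop:kro}, a Host--Kra result --- a genuine issue, since $\pi$ is a priori only measurable); and one chooses $x_*=x_1$ satisfying three conditions, not one: generic for $\mu$ along a tempered F\o lner sequence (Lindenstrauss, Lemma~\ref{lem:L} and Corollary~\ref{cor:L}), in the topological support of $\mu$, and with Kronecker coordinate $z_1=\pi(x_1)$ close to $z_0=\pi(x_0)$. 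Writing $\one_E=\phi\circ\pi+\psi$ with $\phi=\E_\mu(\one_E\mid Z)$ and $\E_\mu(\psi\mid Z)=0$, the quasi-random part $\psi$ contributes only a density-zero set of exceptional times, by the defining property~\eqref{eq:def-Kro} of the Kronecker factor, while the structured part gives $\nu(T^{-n}E\times E)\approx\int_Z\phi(z+n\alpha)\phi(z+\beta)\,dm_Z(z)\ge\delta^2/2$ whenever both the rotation $n\alpha$ and the offset $\beta=z_1-z_0$ lie in a small neighborhood $V$ of $0$. Intersecting the positive-density set of times $n$ with $T^nx_0$ near $x_1$ and $n\alpha\in V$ with the complement of the density-zero bad set yields the required $m_i$ with $\nu(T^{-m_i}E\times E)\ge\delta^2/4$. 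So the missing step in your proposal is not a uniformity refinement of an essentially correct local argument; it is the Kronecker-factor decomposition (or some equivalent splitting into almost periodic and quasi-random components), without which the reservoir claim is unjustified and, as stated, false.
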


Here and below, by the \emph{density}  of a subset $A$ of $\N$ we mean its  upper uniform density $\d^*(A)$
defined by
$$
\d^*(A):=\lim_{N\to\infty}\sup_{M\in\N} \frac{|A\cap [M,M+N)|}N.
$$

The proof extends to the case of countable abelian groups up to notational changes.
We believe that a  generalization of the method to countable amenable groups is possible, by using the  maximal isometric factor in place of the Kronecker factor.

The combinatorial part of our prof is closely related to the corresponding part of~\cite{MRR}: Theorem~\ref{th:one} is the dynamical counterpart of~\cite[Theorem 2.2]{MRR}. But the whole proof of Theorem~\ref{th:main} 
presented here is very different of~\cite{MRR}, as it is written inside the framework of classical ergodic theory.

In~\cite{MRR} the authors ask whether any set of positive density contains the sum of three infinite sets. A possible strategy would be to show that it contains the sum of a set of positive density and an infinite set,  and then to use Theorem~\ref{th:main}. However, this method does not work:
\begin{proposition}
\label{prop:counterexample}
There exists a set of positive density not containing any sum of a set of positive density and an infinite set.
\end{proposition}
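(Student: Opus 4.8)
The plan is to first turn the statement into a cleaner question about intersections of translates. I claim that a set $A$ has the required property --- it contains no sum $D+C$ with $\d^*(D)>0$ and $C$ infinite --- precisely when, for every infinite set $C\subseteq\N$, the intersection $\bigcap_{c\in C}(A-c)$ satisfies $\d^*\big(\bigcap_{c\in C}(A-c)\big)=0$. Indeed, the inclusion $D+C\subseteq A$ is equivalent to $D\subseteq\bigcap_{c\in C}(A-c)$, so if some such intersection had positive density we could take it as $D$ (and keep the same infinite $C$), while conversely any witnessing $D$ lies inside the corresponding intersection and, by monotonicity of $\d^*$, forces that intersection to have positive density. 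Thus it suffices to construct a single set $A$ with $\d^*(A)>0$ for which every such infinite intersection is $\d^*$-null.

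For the construction I would take $A$ to be a union of intervals $A=\bigcup_{k\ge 1}[a_k,a_k+\ell_k)$ whose lengths $\ell_k$ tend to infinity, but whose gaps grow extremely fast, say $a_{k+1}>2^{\,a_k+\ell_k}$, so that the block positions form a lacunary, strongly dissociated sequence. Since $A$ then contains arbitrarily long intervals, one gets $\d^*(A)=1>0$ at once, so the density requirement is free and all the content lies in the null-intersection requirement. As preparatory remarks I would record two elementary consequences of the gap growth: first, any window much shorter than the surrounding gaps meets at most one block; second, for a fixed difference $t$ the autocorrelation $A\cap(A-t)$ is governed by the finitely many pairs of blocks whose mutual distance is close to $t$, so that requiring membership in $A-c$ simultaneously for several spread-out $c$ rapidly constrains where a point can sit.

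To bound $\bigcap_{c\in C}(A-c)$ I would argue by contradiction: were its density positive, there would be a long window carrying a proportion $\ge\rho$ of its points, and the whole configuration, translated by each $c\in C$, would have to land inside $A$. Fixing $\min C$ singles out, for each scale $k$, a thin lacunary band of admissible offsets $c-\min C$ that keep a scale-$k$ block of the intersection alive; the heart of the matter is that the super-exponential gap growth makes the bands belonging to different scales essentially disjoint, so an infinite $C$ can be compatible with only finitely many scales at once. This caps the length of the surviving blocks, and since those blocks are themselves lacunarily separated it drives $\d^*\big(\bigcap_{c\in C}(A-c)\big)$ to $0$. I expect the main obstacle to be exactly this last step: the set $C$ is adversarial, chosen after $A$, so one cannot simply select convenient shifts but must show that \emph{no} infinite sequence of shifts can stay aligned with the block structure across infinitely many scales simultaneously. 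Making the notion of admissible band precise and proving that the bands cannot be threaded by any infinite $C$ is where the real work lies, and it is what dictates how fast the gaps must grow.
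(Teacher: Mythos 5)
Your opening reduction is correct: $A$ contains no sum $D+C$ with $\d^*(D)>0$ and $C$ infinite if and only if $\d^*\bigl(\bigcap_{c\in C}(A-c)\bigr)=0$ for every infinite $C\subseteq\N$. But the construction you build on it is false as stated: the hypotheses ``$\ell_k\to\infty$ and $a_{k+1}>2^{a_k+\ell_k}$'' permit the block lengths to be comparable to the block positions, and then $A$ acquires exactly the additive structure you need to forbid. Concretely, take $a_1=2$, $\ell_k=a_k$ and $a_{k+1}=2^{2a_k}+1$, so that $A=\bigcup_k[a_k,2a_k)$ satisfies both of your conditions. Put $B=\bigcup_{j\ \mathrm{odd}}\bigl[a_j,\tfrac32 a_j\bigr)$ and $C=\{a_M\colon M\ \mathrm{even}\}$. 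Then $C$ is infinite, $\d^*(B)=1$ (as $B$ contains intervals of length $\tfrac12 a_j\to\infty$), and $B+C\subseteq A$: if $b\in[a_j,\tfrac32 a_j)$ and $c=a_M$ with $j<M$, then $b+c\in[a_M+a_j,\,a_M+\tfrac32 a_j)\subseteq[a_M,2a_M)$ since $\tfrac32 a_j<a_M$; if $j>M$, then $b+c\in[a_j+a_M,\,\tfrac32 a_j+a_M)\subseteq[a_j,2a_j)$ since $a_M<\tfrac12 a_j$; and $j=M$ is excluded by parity. So gap growth alone cannot do the job: what is actually needed is a Sidon-type condition, that the differences $a_M-a_m$ be pairwise distinct up to errors of the order of the block lengths, and that forces the lengths $\ell_k$ to be kept far smaller than the positions $a_k$ (e.g.\ $\ell_k=k$), a constraint absent from your proposal.

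Even after such a repair, your last paragraph explicitly defers the crucial step---that no infinite $C$ can stay aligned with the block structure across infinitely many scales---as ``where the real work lies,'' so the argument is a program rather than a proof precisely at the point where the proposition's content sits. It is worth comparing with the paper's route, which is entirely different and avoids all of this bookkeeping: take a uniquely ergodic topological system $(X,T)$ whose invariant measure $\mu$ is mixing, an open $U$ with $0<\mu(U)\leq\mu(\overline U)<1$, and $A=\{n\colon T^nx_0\in U\}$. If $B+C\subseteq A$ with $\d^*(B)>0$ and $C$ infinite, then $K=\overline{\{T^bx_0\colon b\in B\}}$ has $\mu(K)\geq\d^*(B)>0$ by unique ergodicity, while $T^cK\subseteq\overline U$ for every $c\in C$; mixing gives $\mu\bigl(T^nK\cap(X\setminus\overline U)\bigr)>0$ for all large $n$, a contradiction. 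Unique ergodicity is what converts Banach density of $B$ into measure, and mixing is what kills infinitely many translates; these are the exact dynamical counterparts of the density transfer and the band-disjointness that your combinatorial approach would still have to establish by hand.
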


Section~\ref{sec:preliminaries} contains some preliminaries. Theorem~\ref{th:main} is proved in Section~\ref{sec:proof} and Proposition~\ref{prop:counterexample} in Section~\ref{sec:counterexample}.



\section{Preliminaries}
\label{sec:preliminaries}
In this paper we use the convention that $\N=\{0,1,2,\dots\}$.
\subsection{F\o lner sequences}

We make a constant use of F\o lner sequences but, since we are dealing only with subsets of the integers, we never need general F\o lner sequences and we can assume that a  F\o lner sequence is a sequence $\bPhi=(\Phi_N)_{N\geq 1}$ of intervals of $\N$ whose lengths tend to $\infty$. If $A\subset \N$ we write
\begin{gather*}
\ds_\bPhi(A)=\limsup_{N\to\infty}\frac{|A\cap\Phi_N]}{|\Phi_N|};\\
d_\bPhi(A)=\lim_{N\to\infty}\frac{|A\cap\Phi_N]}{|\Phi_N|}\text{ if this limit exists.}
\end{gather*}
Therefore, 
$$
\d^*(A)=\sup_\bPhi \d_\bPhi(A)$$
where the supremum is taken for all F\o lner sequences $\bPhi$ such that $\d_\bPhi(A)$ exists, and the supremum is attained.

\subsection{Topological dynamical systems}
A topological dynamical system $(X,T)$ is a compact metric space endowed with a map $T\colon X\to X$, continuous and onto. We write $d_X$ for the distance on $X$.

Let $x_0\in X$, $\bPhi$ a F\o lner sequence and $\mu$ a probability measure on $X$. We say that $x_0$ is \emph{generic for $\mu$ along $\bPhi$} if 
$$
\frac 1{|\Phi_N|}\sum_{n\in\Phi_N}\delta_{T^nx_0}\to\mu\text{ weakly* as } N\to\infty
$$
where $\delta_x$ is the Dirac mass at $x$. This is equivalent to
$$
\text{for every }f\in\CC(X), \quad \frac 1{|\Phi_N|}\sum_{n\in\Phi_N}f(T^nx_0)\to\int f\,d\mu \text{ as }N\to\infty.
$$
In this case, $\mu$ is supported on the closed orbit of $x_0$ under $T$ and is invariant under $T$. For  $U\subset X$ we have
\begin{align*}
\ds_\bPhi\bigl(\{(n\colon T^nx_0\in U\}\bigr)&\geq\mu(U)\text{ if $U$ is open;}\\
\d_\bPhi\bigl(\{(n\colon T^nx_0\in U\}\bigr)&=\mu(U)\text{ if $U$ is clopen  (that is, open and closed).}
\end{align*}

We remark that for every $x_0\in X$, every F\o lner sequence admits a subsequence along which $x_0$ is generic for some measure $\mu$.

We say that $x_0$ is generic for $\mu$ without mention of a F\o lner sequence if it is generic for the usual F\o lner sequence $\bigl([0,N)\bigr)_{N\geq 1}$.

\subsection{The key combinatorial result}
The next theorem is the dynamical counterpart  of~\cite[Theorem~2.2]{MRR}.
\begin{theorem}
\label{th:one}
Let $(X,T)$ be a topological dynamical system, $x_0\in X$,  $E$ a clopen subset of $X$ and 
$$
A=\{n\in\N\colon T^nx_0\in E\}.
$$
Let $X\times X$ be endowed with the transformation $T\times T$.
Let $x_1\in X$ and let $\nu$ be a measure on $X\times X$, for which the point $(x_0,x_1)$ is  generic along some F\o lner sequence $\bPhi=(\Phi_N)_{N\in\N}$. Assume that there exists $\ve>0$ and a sequence $(m_i)$ of integers tending to $\infty$ with
\begin{equation}
\label{eq:one}
T^{m_i}x_0\to x_1\ \text{ and }\ \nu(T^{-m_i}E\times E)\geq\ve\text{ for every }i.
\end{equation}
Then there exist infinite subsets $B$, $C$ of $\N$ with $A\supset B+C$.
\end{theorem}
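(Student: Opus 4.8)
The plan is to construct $B$ and $C$ greedily, building up two increasing sequences $b_1 < b_2 < \cdots$ and $c_1 < c_2 < \cdots$ so that every sum $b_j + c_k$ lands in $A$, i.e. $T^{b_j + c_k} x_0 \in E$. The guiding intuition is that the genericity hypothesis gives $\nu(T^{-m_i}E \times E) \geq \ve > 0$, and I want to interpret each factor as a statement about the orbit of $x_0$. The first coordinate being in $T^{-m_i}E$ under $\nu$ should correspond, via genericity of $(x_0,x_1)$ along $\bPhi$, to a positive density of times $n$ at which both $T^n x_0$ lies near the orbit of $x_0$ shifted by $m_i$ and $T^n x_1 \in E$; the second coordinate condition $E$ pairs with approximating $x_1$ by $T^{m_i} x_0$. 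The key engine is the convergence $T^{m_i} x_0 \to x_1$, which lets me replace $x_1$ by a genuine orbit point of $x_0$ at the cost of a small error controlled by continuity and the clopen (hence both open and with boundary of measure zero for the relevant measures) nature of $E$.

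First I would fix the framework: since $E$ is clopen, its indicator $\one_E$ is continuous, so genericity of $(x_0,x_1)$ along $\bPhi$ gives honest convergence of averages of $\one_{T^{-m}E \times E}(T^n x_0, T^n x_1)$ to $\nu(T^{-m}E\times E)$ for each fixed $m$. The inductive step is where the real work lies. Suppose I have chosen $b_1,\dots,b_j$ and $c_1,\dots,c_k$ with all cross-sums in $A$. To extend, I would choose a new index $m_i$ with $i$ large, set a candidate related to $m_i$, and use the lower bound $\nu(T^{-m_i}E\times E)\geq\ve$ together with genericity to locate a time $n$ in some $\Phi_N$ at which $T^n x_0$ is simultaneously close to the finitely many constraints already imposed by $b_1,\dots,b_j$ and at which the $E$-membership needed for the new sums holds. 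Concretely, I would add a new element of $C$ of the form built from $m_i$ so that $b_\ell + (\text{new } c)$ lands in $E$ for all $\ell \leq j$ using that $T^{m_i}x_0\to x_1$ makes $T^{b_\ell + m_i}x_0$ close to $T^{b_\ell}x_1$, and the positive $\nu$-measure guarantees a compatible time; symmetrically I would add a new element of $B$. The clopenness of $E$ is essential here: because $E$ is open, being merely close to a point of $E$ suffices to be in $E$, so the approximations $T^{m_i}x_0\to x_1$ can be upgraded to exact membership once $i$ is large enough.

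The main obstacle I expect is the simultaneity: at each stage I must find a single time $n$ satisfying a growing, finite list of proximity and membership conditions coming from the already-chosen $b_\ell$ and $c_\ell$, while also respecting the $\nu$-measure lower bound that supplies the new increment. The delicate bookkeeping is to phrase each constraint as membership of $(T^n x_0, T^n x_1)$ in a fixed open set of positive $\nu$-measure, intersect finitely many such open sets, and verify the intersection still has positive $\nu$-measure so that genericity produces arbitrarily large valid $n$; the hypothesis $\nu(T^{-m_i}E\times E)\geq\ve$ uniform in $i$ is exactly what prevents the measure from degenerating as the list grows. Equivalently, one passes to a subsequence of the $(m_i)$ at each step so that the relevant product sets nest or converge, and extracts a diagonal sequence. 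Once this inductive construction is carried out, the infinitude of $B$ and $C$ is automatic from the strict monotonicity forced by sending $i\to\infty$, and $A\supset B+C$ holds by construction, completing the proof.
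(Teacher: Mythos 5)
Your overall strategy coincides with the paper's: build $B$ and $C$ by an alternating induction, where new elements of one sequence are found at times $n$ for which $(T^nx_0,T^nx_1)$ lies in a finite intersection of sets of the form $T^{-m}E\times E$ (clopenness of $E$ and genericity of $(x_0,x_1)$ along $\bPhi$ turn positive $\nu$-measure of such an intersection into positive density, hence infinitude, of such times $n$), while new elements of the other sequence are taken of the form $m_i$ with $i$ large, using that $E$ is open and $T^{m_i}x_0\to x_1$ to upgrade proximity to genuine membership in $E$. The second half of this is fine. The gap is in the first half, at the one point of the proof that needs a real idea. You write that ``the hypothesis $\nu(T^{-m_i}E\times E)\geq\ve$ uniform in $i$ is exactly what prevents the measure from degenerating as the list grows.'' That is false as stated: a uniform lower bound on the measures of individual sets gives no control at all on their finite intersections, which can perfectly well be empty --- two sets of measure $\ve\le 1/2$ can be disjoint, and nothing in the hypotheses forbids the sets $T^{-m_i}E\times E$ from being pairwise disjoint as given. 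Yet your inductive step needs precisely $\nu\bigl(\bigcap_{k\le r}(T^{-m_{i_k}}E\times E)\bigr)>0$ in order for genericity to produce a new element satisfying all current constraints simultaneously; the constraints cannot be decoupled, since every cross sum $b+c$ must lie in $A$.

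The missing ingredient is Bergelson's intersectivity lemma (Lemma~\ref{lem:Ber}, from \cite{B}): any sequence of sets of measure $\ge\ve$ admits a subsequence \emph{all} of whose finite intersections have positive measure. The paper applies it once, to the sets $T^{-m_i}E\times E$, before starting the induction; after that, the construction is exactly the bookkeeping you describe. Your remark about passing to a subsequence ``so that the relevant product sets nest or converge'' and diagonalizing gestures in this direction but does not supply the mechanism: the sets do not nest, and weak convergence of the indicators $\one_{T^{-m_i}E\times E}$ to some limit $f$ does not by itself control measures of intersections, because products of weakly convergent sequences need not converge to the product of the limits. One has to argue as Bergelson does: from $\int f\,d\nu\ge\ve$ one gets a set $D$ of positive measure on which $f\ge\ve/2$, and then the subsequence is chosen inductively, using weak convergence to guarantee that the integral of the next indicator over $D$ intersected with the current positive-measure intersection is eventually close to the integral of $f$ there, which is positive; hence the next set can be adjoined while keeping the intersection of positive measure. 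Without this lemma (or an equivalent substitute), your induction cannot get past its second step.
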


This theorem can be deduced from~\cite[Proposition 2.5]{MRR} but we give a  proof for completeness.
The construction gives that the set $B$ ban be taken as  included in any given infinite subset of $\{m_i\colon i\in\N\}$; the set $C$ can be taken as included in any infinite given subset of 
$\{\ell\in\N\colon T^\ell x_1\in E\}$.

\begin{proof}
We use the following result of Bergelson:
\begin{lemma}[\cite{B}]
\label{lem:Ber}
Let $(Y,\CY,\lambda)$ be a probability space, $\ve>0$, and $(B_n)_{n\geq 1}$ a sequence in $\CY$ with $\lambda(B_n)\geq\ve$ for every $n$. Then this sequence admits a subsequence $(B_{n_j})_{j\geq 1}$ such that
$$
\text{ for every }k\geq 1,\quad \lambda\Bigl(\bigcap_{j=1}^k B_{n_j}\Bigr)>0.
$$
\end{lemma}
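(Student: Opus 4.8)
The plan is to linearize the problem by passing to indicator functions in $L^2(\lambda)$, extracting a weakly convergent subsequence, and then building the desired subsequence by an induction that carries the weak limit along as a weight through the successive intersections.

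First I would set $f_n=\one_{B_n}\in L^2(Y,\CY,\lambda)$, so that $\|f_n\|_2\le 1$ and $\int f_n\,d\lambda=\lambda(B_n)\ge\ve$. Since bounded sequences in a Hilbert space admit weakly convergent subsequences (restricting, if one wants to be careful, to the separable closed span of the $f_n$), after reindexing I may assume $f_n\rightharpoonup f$ weakly in $L^2$. Testing against the constant function $1\in L^2$ gives $\int f\,d\lambda\ge\ve>0$, and since the convex set $\{0\le g\le 1\}$ is norm closed, hence weakly closed, I obtain $0\le f\le 1$ almost everywhere. The one elementary fact I will reuse is that for any measurable $S$ with $\int_S f\,d\lambda>0$ one has $\int_S f^2\,d\lambda>0$: indeed $\int_S f\,d\lambda>0$ forces $\lambda(S\cap\{f>0\})>0$, and $f^2>0$ there.

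I would then construct $n_1<n_2<\dots$ by induction, maintaining the invariant $\int F_k\,d\lambda>0$, where $F_k=\one_{B_{n_1}\cap\dots\cap B_{n_k}}\cdot f$. The base case $k=0$ is $\int f\,d\lambda\ge\ve>0$. For the inductive step, note $f F_k=\one_{B_{n_1}\cap\dots\cap B_{n_k}}\,f^2$, so applying the elementary fact with $S=B_{n_1}\cap\dots\cap B_{n_k}$ (whose $f$-integral is $\int F_k>0$) gives $\int f F_k\,d\lambda>0$. Since $F_k\in L^\infty\subset L^2$, weak convergence yields $\int\one_{B_n}F_k\,d\lambda\to\int f F_k\,d\lambda>0$, hence $\int\one_{B_n}F_k\,d\lambda>0$ for all large $n$; choosing such an $n_{k+1}>n_k$ is exactly $\int F_{k+1}\,d\lambda>0$. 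Finally $F_k\le\one_{B_{n_1}\cap\dots\cap B_{n_k}}$ gives $\lambda(B_{n_1}\cap\dots\cap B_{n_k})\ge\int F_k\,d\lambda>0$ for every $k$, as required.

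The main obstacle is conceptual rather than computational: a direct attempt to intersect sets of measure at least $\ve$ fails at once, since such sets can be pairwise disjoint, so some compactness input is unavoidable. The key idea is to replace \emph{positive measure of the intersection} by the strictly stronger and inductively stable quantity $\int_{B_{n_1}\cap\dots\cap B_{n_k}}f\,d\lambda$; the implication $\int_S f\,d\lambda>0\Rightarrow\int_S f^2\,d\lambda>0$ is precisely what lets the weak limit reproduce itself under one further intersection and thereby keeps the induction alive. The verification that $0\le f\le 1$ and the separability point needed to extract the weak limit are routine.
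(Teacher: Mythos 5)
Your proof is correct. Note that the paper itself gives no proof of this lemma: it is quoted verbatim from Bergelson's paper \cite{B} and used as a black box inside the proof of Theorem~\ref{th:one}, so there is no in-paper argument to compare against. Your argument --- pass to a weak $L^2$ limit $f$ of the indicators, observe $0\le f\le 1$ and $\int f\,d\lambda\ge\ve$, and run the induction with the weighted quantity $\int_{B_{n_1}\cap\dots\cap B_{n_k}}f\,d\lambda$ kept positive via the implication $\int_S f\,d\lambda>0\Rightarrow\int_S f^2\,d\lambda>0$ --- is a complete and correct rendition of the standard (essentially Bergelson's original) weak-compactness proof, and all the delicate points (weak closedness of $\{0\le g\le 1\}$, $F_k\in L^2$, the final comparison $\lambda(B_{n_1}\cap\dots\cap B_{n_k})\ge\int F_k\,d\lambda$) are handled correctly.
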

Applying this lemma to the probability $\nu$ on $X\times X$ and to the sets $T^{-m_i}E\times E$, we obtain 
that the sequence $(m_i)_{i\geq 1}$  admits a subsequence, still written $(m_i)$, such that 
\begin{equation}
\label{eq:nu}
\text{for every }r\geq 1,\quad \nu\Bigl(\bigcap_{i=1}^r(T^{-m_i}E\times E)\Bigr)>0.
\end{equation}
We write
$$
L= \{\ell\in\N\colon T^\ell x_1\in E \}.
$$
For $n\in\N$, we have $(T\times T)^n(x_0,x_1)\in T^{-m_i}E\times E$ if and only if $n\in (A-m_i)\cap L$. By definition of $\nu$ and~\eqref{eq:nu},
\begin{equation}
\label{eq:inter}
\text{for every }r\geq 1,\quad \d_{\bPhi}\Bigl(\bigcap_{i=1}^r(A-m_i)\cap L\Bigr) >0.
\end{equation}
and in particular this set is infinite.

We build by induction strictly increasing sequences $(b_n)_{n\geq 1}$ and $(c_n)_{n\geq 1}$ with
$$
b_n\in\{m_i\colon i\geq 1\}\text{ and }c_n\in L\text{ for every }n;\ b_i+c_j\in A\text{ for all }i,j\geq 1.
$$

Let $n\geq 0$. If $n=0$ we chose an arbitrary $c_1\in L$.  Assume that $n\geq 1$ and that $b_1,\dots,b_n$ and $c_1,\dots,c_n$ are built with $b_i+c_j\in A$ for all $i,j\leq n$. By~\eqref{eq:inter} applied with $r=b_n$ we can chose $c_{n+1}>c_n$ with
$$
c_{n+1}\in\bigcap_{i=1}^n(A-b_i)\cap L
$$
and we have $b_i+c_{n+1}\in A$ for $i\leq n$. 

In both cases, for $j\leq n+1$ we have $c_j\in L$ that is $T^{c_j}x_1\in E$. Since $E$ is open and since $T^{m_i}x_0\to x_1$, for every sufficiently large $i$ we have 
$T^{m_i+c_j}x_0\in E$ for every $j\leq n+1$, that is $m_i+c_j\in A$. We chose $i$ with this property and moreover $m_i>b_n$ if $n\geq 1$,  and we set $b_{n+1}=m_i$.
We have $b_{n+1}+c_j\in A$ for $1\leq j\leq n+1$ and we are done.
\end{proof}

\section{Proof of Theorem~\ref{th:main}}
\label{sec:proof}

In this Section, $A$ is a subset of $\N$ with $\delta=\d^*(A)>0$, and we build the different objects appearing in the statement of Theorem~\ref{th:one} and check that they satisfy the required conditions. Theorem~\ref{th:main} then follows from Theorem~\ref{th:one}.

\subsection{Construction of a first  system}

Let $\{0,1\}^\N$ be endowed with the product topology and with the shift $T$. Elements of $\{0,1\}^\N$ are written as $x=(x(n))_{n\in\N}$. We consider $\one_A$ as an element of $\{0,1\}^\Z$ that we write $x_0$, and define 
\begin{itemize}
\item[(i)]
$X$  is the closed orbit of $x_0$ under $T$.
\end{itemize}
 Let $E$ be the cylinder set $E=\{x\in X\colon x(0)=1\}$. 
We have:
\begin{itemize}
\item[(ii)]
$E$ is a clopen subset of $X$ and 
$A=\{n\in\N\colon T^nx_0\in E\}$.
\end{itemize}

Let $\bPhi$ be a F\o lner sequence with $\d_{\bPhi}(A)=\delta$. Replacing $\bPhi$ by a subsequence we can assume that 
\begin{itemize}
\item[(iii)]
$x_0$ is generic along $\bPhi$ for some measure $\mu$ on $X$
\end{itemize}
and since $E$ is clopen we have
\begin{itemize}
\item[(iv)]
$\mu(E)\geq \delta$.
\end{itemize}

\subsection{Construction of an ergodic measure}
Let $\mu=\int_\Omega
\mu_\omega\,dP(\omega)$ be the ergodic decomposition of $\mu$ under $T$. 
 Since $\int_\Omega
\mu_\omega(E)\,dP(\omega)=\delta$, we have  $P(\{\omega\in\Omega\colon\mu'_\omega(E)\geq\delta\})>0$.
Therefore there exists an ergodic measure $\mu'$ with $\mu'(E)\geq\delta$.

By the pointwise ergodic theorem, the measure $\mu'$ admits a generic point $x_1\in X$ in the usual sense, meaning  that for every $f\in\CC(X)$ we have
$$
\frac 1N\sum_{n=0}^{N-1}f(T^nx_1)\to \int f\,d\mu\text{ as }N\to\infty.
$$
Since the orbit of $x_0$ is dense and $T$ is continuous there exists a sequence $(m_N)_{N\geq 1}$ of integers such that 
$$
\text{for every }N\text{ and }0\leq n< N,\ d_X(T^{m_N+n}x_0,T^nx_1)<1/N.
$$
For every $f\in\CC(X)$, by uniform continuity of $f$ we have
$$
\frac 1N\sum_{n=0}^{N-1}f(T^{m_N+n}x_0)\to\int f\,d\mu\text{ as }N\to\infty.
$$
This means that $x_0$ is  generic of $\mu'$ along the F\o lner sequence $\bPhi'=(\Phi'_N)_{N\geq 1}$ where $\Phi'_N=[m_N,m_N+N)$.

Substituting $\bPhi'$ for $\bPhi$ and $\mu'$ for $\mu$, the properties (i)-(iv) remain valid and moreover
\begin{itemize}
\item[(v)]
$\mu$ is ergodic under $T$.
\end{itemize}

\subsection{Using the Kronecker factor}
We recall that the Kronecker factor $(Z,m_Z,S)$ of an ergodic system $(X,\mu,T)$ is a compact abelian group $Z$ endowed with its Haar measure $m_Z$, with a translation $S$  and with a factor map $\pi\colon X\to Z$, characterized by
\begin{multline}
\label{eq:def-Kro}
\text{for all }\phi,\psi\in L^2(\mu)
\text{ with }\E_\mu(\psi\mid Z)=0
\\
\text{for every }\ve>0,\quad
\d^*\Bigl(\Bigl\{n\in\N\colon \Bigl|
\int T^n\psi\cdot \phi\,d\mu\Bigr|\geq\ve\Bigr\}\Bigr)=0.
\end{multline}
The transformation $S$ has the form
\begin{equation}
\label{eq:def-S}
\text{for every }z\in Z,\quad Sz=z+\alpha
\end{equation}
where $\alpha$ is a fixed element of $Z$.

It should be noticed that, even when $\mu$ is an invariant ergodic measure on a topological dynamical system $(X,T)$, the factor map $\pi\colon X\to Z$ is not continuous in general. The next Proposition explains how to modify the given system and obtain continuity. It is stated in~\cite{HK,HK2} for a distal system, and thus applies in particular to the isometric system $(Z,S)$.

\begin{proposition}[\mbox{\cite[Proposition 6.1]{HK}, \cite[Proposition 24.3]{HK2}}]
\label{prop:kro}
Let $(X,T)$ be a topological dynamical system, $x_0\in X$, and $\mu$ an ergodic invariant probability measure supported on the closed orbit of $x_0$ under $T$. Let $(Z,m_Z,S)$ be the Kronecker factor of $(X,\mu,T)$, with factor map $\pi\colon X\to Z$.
 Let $X\times Z$ be endowed with the transformation $T\times S$.
 Let $\wt\mu$ be the measure on $X\times Z$, image of $\mu$ under the map $x\mapsto (x,\pi(x))\colon X\to X\times Z$. Then there exists a F\o lner sequence $\wt\bPhi$ and a point $z_0$ of $Z$ such that $(x_0,z_0)$ is generic for $\wt\mu$ along $\wt\bPhi$.
\end{proposition}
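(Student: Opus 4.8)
The plan is to exhibit an explicit generic point of the form $(x_0,z_0)$ by transferring genericity from a well-chosen point of $X\times Z$ to $x_0$ along a shifted F\o lner sequence — exactly the device used to pass from $x_1$ to $x_0$ in Section~\ref{sec:proof} — and then to absorb the resulting phase drift in the group $Z$ into the choice of $z_0$. The point I want to stress at the outset is that the non-continuity of $\pi$ will never be an issue, because $\pi$ is evaluated only at a fixed reference point and never along the orbit of $x_0$.

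First I would fix a good reference point. Since $\wt\mu$ is the image of the ergodic measure $\mu$ under the graph map $x\mapsto(x,\pi(x))$, the system $(X\times Z,\wt\mu,T\times S)$ is ergodic, so by the pointwise ergodic theorem together with the separability of $\CC(X\times Z)$, almost every point is generic for $\wt\mu$ along $\bigl([0,N)\bigr)_{N\geq1}$. I choose such a point; as $\wt\mu$ is carried by the graph of $\pi$, it has the form $(x^*,\pi(x^*))$ with $x^*$ in the domain of $\pi$. Next, using that the orbit of $x_0$ is dense in $X$ and that $T,\dots,T^{\ell-1}$ are uniformly continuous, I would build integers $m_k\to\infty$ and block lengths $\ell_k\to\infty$ with $d_X(T^{m_k+j}x_0,T^jx^*)<1/k$ for all $0\leq j<\ell_k$. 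By compactness of $Z$ I may, after passing to a subsequence, also assume $m_k\alpha\to\beta$ for some $\beta\in Z$. The candidate F\o lner sequence is $\wt\bPhi=(\Phi_k)$ with $\Phi_k=[m_k,m_k+\ell_k)$.

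By Stone--Weierstrass (equivalently Peter--Weyl on the compact abelian group $Z$) it suffices to test genericity on functions $f\otimes\chi$ with $f\in\CC(X)$ and $\chi$ a character of $Z$. Here the explicit form $S^nz_0=z_0+n\alpha$ is decisive: writing $n=m_k+j$ one factors
$$
\frac1{\ell_k}\sum_{n\in\Phi_k}f(T^nx_0)\,\chi(S^nz_0)=\chi(z_0+m_k\alpha)\cdot\frac1{\ell_k}\sum_{j=0}^{\ell_k-1}f(T^{m_k+j}x_0)\,\chi(j\alpha).
$$
Replacing $T^{m_k+j}x_0$ by $T^jx^*$ costs only the modulus of continuity of $f$, uniformly in $j<\ell_k$, while the genericity of $(x^*,\pi(x^*))$ for $\wt\mu$ gives $\frac1{\ell_k}\sum_{j<\ell_k}f(T^jx^*)\chi(j\alpha)\to\overline{\chi(\pi(x^*))}\int f\cdot(\chi\circ\pi)\,d\mu$. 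Hence the average converges to $\chi\bigl(z_0+\beta-\pi(x^*)\bigr)\int f\cdot(\chi\circ\pi)\,d\mu$, which equals $\int f\otimes\chi\,d\wt\mu$ once the phase is killed by setting $z_0:=\pi(x^*)-\beta$.

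The step to watch, and the only genuine obstacle, is the phase drift $\chi(m_k\alpha)$: the naive lift $z_0=\pi(x^*)$ fails because $m_k\alpha$ need not tend to $0$. The resolution is structural rather than analytic — since $(Z,S)$ is a group rotation the second coordinate evolves by a known translation and the orbit approximation is needed only in the first coordinate, so compactness of $Z$ lets me force $m_k\alpha\to\beta$ and then cancel $\beta$ in the definition of $z_0$. Two routine verifications remain: that the block approximation can indeed be arranged with $\ell_k\to\infty$ (density of the orbit plus uniform continuity of iterates of $T$), and that a single reference point $(x^*,\pi(x^*))$ can be taken generic for $\wt\mu$ with $x^*$ in the domain of $\pi$ (separability of $\CC(X\times Z)$ and the pointwise ergodic theorem).
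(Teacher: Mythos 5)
Your proof is correct, and it is genuinely different from what the paper does: the paper does not prove Proposition~\ref{prop:kro} at all, but imports it from \cite{HK} and \cite{HK2}, where it is established with an arbitrary distal factor in place of the Kronecker factor by a substantially longer argument that cannot rely on an explicit formula for the factor transformation. Your route is special to group rotations, and that is exactly what makes it short: since $S^nz_0=z_0+n\alpha$, the orbit of $x_0$ only needs to shadow the reference orbit in the $X$-coordinate, and the unknown phases $m_k\alpha$ are handled by compactness of $Z$ and then cancelled by the choice $z_0=\pi(x^*)-\beta$. All the real steps check out: $(X\times Z,\wt\mu,T\times S)$ is ergodic because the first projection is a measure-theoretic isomorphism onto $(X,\mu,T)$, so Birkhoff's theorem applied to a countable dense subset of $\CC(X\times Z)$ yields a generic point of the form $(x^*,\pi(x^*))$; the block construction is the same shadowing device the paper uses in Section~\ref{sec:proof} to pass from $x_1$ to $x_0$; and the reduction to the functions $f\otimes\chi$ is legitimate by Stone--Weierstrass because the empirical averages are functionals of norm one. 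What your approach buys is a self-contained half-page proof of exactly what this paper needs; what it gives up is generality — it extends to isometric factors (with Arzel\`a--Ascoli giving uniform convergence of $S^{m_k}$ along a subsequence in place of compactness of $Z$), but not to the distal setting covered by the cited references.

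Three small remarks on the write-up, none affecting correctness. First, what you actually need is not that the orbit of $x_0$ is dense in $X$ (the proposition does not assume $X$ is the orbit closure), but only that $x^*$ lies in the closed orbit of $x_0$; this holds because $x^*$ can be taken in the topological support of $\mu$, which is contained in that closed orbit by hypothesis. Second, the requirement $m_k\to\infty$ is unnecessary and may even be unachievable — if $x^*$ lies on the forward orbit of $x_0$, the natural choice is $m_k$ constant — and it is harmless to drop it, since the paper's F\o lner sequences are intervals whose \emph{lengths} tend to infinity, with no constraint on the left endpoints. Third, the character decomposition can be bypassed: choosing a translation-invariant metric on $Z$, one has $d_Z\bigl(S^{m_k+j}z_0,S^j\pi(x^*)\bigr)=d_Z(m_k\alpha,\beta)$ for all $j$, so the pair $\bigl(T^{m_k+j}x_0,S^{m_k+j}z_0\bigr)$ shadows $\bigl(T^jx^*,S^j\pi(x^*)\bigr)$ uniformly over the whole block, and genericity transfers for every $F\in\CC(X\times Z)$ by uniform continuity alone.
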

 The conclusion means that, for every $f\in\CC(X)$ and every $h\in\CC(Z)$ we have
$$
\int_X f\cdot h\circ\pi\,d\mu=\lim_{N\to\infty}\frac 1{|\wt\Phi_N|}\sum_{n\in\wt\Phi_N}f(T^nx_0)\,h(S^nz_0).
$$
The first projection  $p\colon(X\times Z,T\times T)\to(X,T)$ is a factor map in the topological sense. Moreover, $p\colon(X\times Z,\wt\mu,T\times T)\to(X,\mu,T)$ is an isomorphism in the ergodic sense. In particular, $(X\times Z,\wt\mu,T\times T)$ is ergodic, its  Kronecker factor  is  $(Z,m_Z,T)$,  the factor map is the second projection   and thus is  continuous. 
The key point in the Proposition is the genericity of $(x_0,z_0)$.

\medskip

We apply this Proposition to the system $(X,\mu,T)$ and the point $x_0$ introduced above and let $(Z,m_Z,S)$, $\pi$, $\wt\mu$ and $z_0$  and $\wt\bPhi$ be given by this Proposition. Let  $\wt{x_0}=(x_0,z_0)$ and let $\wt X$ be the closed orbit of $\wt{x_0}$ in $X\times Z$.  Since the point $\wt{x_0}$ is generic for $\wt\mu$ along $\wt\bPhi$, $\wt\mu$ is supported on $\wt X$.
 Let $\wt p\colon\wt X\to X$ and $\wt\pi\colon \wt X\to Z$ be the restriction to $\wt X$ of the two natural projections $X\times Z\to X$ and $X\times Z\to Z$ respectively, and let
 $\wt E=\wt p\inv(E)$.
Then $\wt E$ is a clopen subset of $\wt X$,  $A=\{n\colon \wt T^n\wt{x_0}\in\wt E\}$ and $\d_{\wt\bPhi}(A)=\wt\mu(\wt E)=\mu(E)\geq\delta$.

 Substituting $\wt{x_0}$ for $x_0$, $\wt X$ for $X$, $\wt\bPhi$ for $\bPhi$,  $\wt\mu$ for $\mu$ and $\wt\pi$ for $\pi$, the properties (i)-(v) above remain valid and moreover,
\begin{itemize}
\item[(vi)]
The factor map $\pi\colon X\to Z$ from $X$ to the Kronecker factor $(Z,m_Z,S)$ of $(X,\mu,T)$ is continuous.
\end{itemize}
We define
$$
z_0=\pi(x_0).
$$

\subsection{Choosing a point $x_1$}

We copy the next definition and results from~\cite{L}.
\begin{definition*}[\cite{L}]
A F\o lner sequence $\bPhi=(\Phi_N)$ is \emph{tempered} if there exists a constant $C>0$ such that
$$
\Bigl|\bigcup_{k<N}(\Phi_{N}-\Phi_k)\Bigr|\leq C|\Phi_{N}|
$$
for every $N$. (Here $\Phi_N-\Phi_k=\{a-b\colon a\in\Phi_N,\ b\in\Phi_k\}$.)
\end{definition*}
\begin{lemma}[\cite{L}]
\label{lem:L}
Every F\o lner sequence admits a tempered subsequence.
\end{lemma}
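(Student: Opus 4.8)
The plan is to build the tempered subsequence by induction, choosing each new F\o lner set so large that the aggregate contribution of all previously chosen sets to its difference set is negligible compared with its own size; this yields temperedness with constant $C=2$. Crucially, the argument does not use that the $\Phi_N$ are intervals, only the defining asymptotic invariance of a F\o lner sequence, so it covers arbitrary F\o lner sequences in $\Z$.

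First I would record the elementary consequence of the F\o lner property that powers the argument. Recall that a F\o lner sequence $(\Phi_N)$ in $\Z$ satisfies $|(\Phi_N+g)\triangle\Phi_N|/|\Phi_N|\to 0$ for every fixed $g\in\Z$; for the interval sequences used in this paper this is immediate, since $(\Phi_N-b)\setminus\Phi_N$ has at most $|b|$ elements while $|\Phi_N|\to\infty$. Taking $g=-b$ gives, for each fixed $b\in\Z$,
$$
\frac{|(\Phi_N-b)\setminus\Phi_N|}{|\Phi_N|}\longrightarrow 0\quad\text{as }N\to\infty.
$$
Since for any finite set $F\subset\Z$ one has $\Phi_N-F=\bigcup_{b\in F}(\Phi_N-b)$, subadditivity of cardinality yields
$$
\frac{|(\Phi_N-F)\setminus\Phi_N|}{|\Phi_N|}\le\sum_{b\in F}\frac{|(\Phi_N-b)\setminus\Phi_N|}{|\Phi_N|}\longrightarrow 0 .
$$
The point is that this holds for every fixed finite $F$, with a rate that may depend on $F$.

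Next comes the induction. I set $N_1=1$ and, assuming $N_1<\dots<N_{j-1}$ have been chosen, I let $F_{j-1}=\bigcup_{i=1}^{j-1}\Phi_{N_i}$, a fixed finite subset of $\Z$. Applying the displayed limit to $F=F_{j-1}$, I choose $N_j>N_{j-1}$ large enough that $|(\Phi_{N_j}-F_{j-1})\setminus\Phi_{N_j}|\le|\Phi_{N_j}|$. To finish, for each $j$ I note that $\bigcup_{i<j}(\Phi_{N_j}-\Phi_{N_i})\subseteq\Phi_{N_j}-F_{j-1}$, whence
$$
\Bigl|\bigcup_{i<j}(\Phi_{N_j}-\Phi_{N_i})\Bigr|\le|\Phi_{N_j}|+|(\Phi_{N_j}-F_{j-1})\setminus\Phi_{N_j}|\le 2|\Phi_{N_j}| ,
$$
so the subsequence $(\Phi_{N_j})_{j\ge 1}$ is tempered with $C=2$.

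The main obstacle is keeping the quantifiers straight: the temperedness condition quantifies over all earlier sets simultaneously, and a careless reading suggests the cumulative error should grow with $j$. The resolution is that at stage $j$ all earlier sets are frozen into the single finite set $F_{j-1}$, which is determined \emph{before} $N_j$ is selected; one then only needs the individually negligible translates $(\Phi_{N_j}-b)\setminus\Phi_{N_j}$ for the finitely many $b\in F_{j-1}$, and choosing $N_j$ large forces their total size below $|\Phi_{N_j}|$. This is exactly the standard proof of Lindenstrauss's lemma specialized to $\Z$.
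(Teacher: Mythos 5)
Your proof is correct. Note that the paper itself gives no proof of this lemma --- it is quoted directly from Lindenstrauss's paper \cite{L} --- so there is no in-paper argument to compare against; what you have written is precisely the standard inductive argument from that reference, and it is carried out cleanly here. The key points all check out: freezing the previously chosen sets into the single finite set $F_{j-1}$ before selecting $N_j$ resolves the quantifier issue; the identity $\bigcup_{i<j}(\Phi_{N_j}-\Phi_{N_i})=\Phi_{N_j}-F_{j-1}$ holds exactly (not just as an inclusion); and for the interval F\o lner sequences used in this paper the bound $|(\Phi_N-b)\setminus\Phi_N|\le|b|$ makes the required limit immediate, giving temperedness with $C=2$.
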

\begin{theorem}[\cite{L}]
\label{th:L}
If $(X,\mu,T)$ is a system and $\bPhi$ is a tempered F\o lner sequence in $\N$ then for every $f\in L^1(\mu)$ the limit 
$$
\lim_{N\to\infty}\frac 1{|\Phi_N|}\sum_{n\in\Phi_N}f(T^nx)
$$
exists of $\mu$-almost every $x$. If $\mu$ is ergodic, then the limit is the constant $\int f\,d\mu$.
\end{theorem}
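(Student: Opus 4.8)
The plan is to follow the standard two-ingredient recipe for a pointwise ergodic theorem: a \emph{maximal inequality} together with \emph{pointwise convergence on a dense subclass} of $L^1(\mu)$, combined through the Banach principle. Throughout I write $U_Tf=f\circ T$ for the Koopman operator, $A_Nf(x)=\frac1{|\Phi_N|}\sum_{n\in\Phi_N}f(T^nx)$ for the averages, and $f^*=\sup_N|A_Nf|$ for the associated maximal function. First I would dispose of the soft half. Since $\bPhi$ is a F\o lner sequence, the asymptotic invariance $|\Phi_N\triangle(\Phi_N+1)|/|\Phi_N|\to0$ gives the mean ergodic theorem: $A_Nf\to \E_\mu(f\mid\mathcal I)$ in $L^2(\mu)$, where $\mathcal I$ is the $\sigma$-algebra of $T$-invariant sets and $\E_\mu(f\mid\mathcal I)=\int f\,d\mu$ when $\mu$ is ergodic. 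This already pins down the only possible pointwise limit and settles the norm statement for free, so it is the almost-everywhere assertion that carries all the content.

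Next I would exhibit a dense class on which pointwise convergence holds outright. Because each $\Phi_N$ is an \emph{interval} $[a_N,a_N+L_N)$, a coboundary $f=g-g\circ T$ with $g\in L^\infty(\mu)$ telescopes:
$$
A_Nf=\frac1{L_N}\bigl(g\circ T^{a_N}-g\circ T^{a_N+L_N}\bigr),
$$
so $A_Nf\to0$ everywhere, with the explicit bound $|A_Nf|\le 2\|g\|_\infty/L_N$. Together with the invariant functions, for which $A_Nf=f$ identically, these span a subspace that is dense in $L^2(\mu)$ by the orthogonal decomposition $L^2(\mu)=\ker(I-U_T)\oplus\overline{(I-U_T)L^2(\mu)}$, and hence dense in $L^1(\mu)$.

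The crux is the maximal inequality, a weak-type bound $\mu(\{f^*>\lambda\})\le C\|f\|_{L^1(\mu)}/\lambda$. Once it is in hand, the set $\{f\in L^1(\mu)\colon A_Nf\text{ converges }\mu\text{-a.e.}\}$ is closed in $L^1(\mu)$ by the Banach principle, and since it contains the dense class above it must be all of $L^1(\mu)$; the a.e.\ limit is then forced to equal $\E_\mu(f\mid\mathcal I)$ by the $L^2$ convergence, giving the constant $\int f\,d\mu$ in the ergodic case. This maximal inequality is exactly where the \textbf{tempered} hypothesis is indispensable, and it is the main obstacle. A naive domination of $A_N$ by a one-sided Hardy--Littlewood maximal operator fails, since the interval $[a_N,a_N+L_N)$ may lie far from the origin and the comparison factor $(a_N+L_N)/L_N$ is unbounded.

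The correct route is a Vitali-type covering argument transferred from $\N$ to the space $(X,\mu)$, and the tempered condition $\bigl|\bigcup_{k<N}(\Phi_N-\Phi_k)\bigr|\le C|\Phi_N|$ is precisely the Shulman-type control that makes a bounded-overlap covering selection possible. Following Lindenstrauss, I would prove the covering step by a \emph{random} choice of the translated sets $\Phi_N$, showing that with positive probability the selected translates cover a definite proportion of any prescribed finite union while overlapping only boundedly on average; a Calder\'on transference argument then converts this combinatorial covering lemma into the weak-$(1,1)$ maximal inequality on $(X,\mu)$. This probabilistic covering lemma is the genuinely hard and original ingredient, and every other step above is routine once it is available.
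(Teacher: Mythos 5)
The paper contains no proof of Theorem~\ref{th:L} to compare against: it is copied verbatim from Lindenstrauss~\cite{L} and used as a black box (the only thing the paper proves from it is Corollary~\ref{cor:L}). So your attempt has to stand on its own as a proof of Lindenstrauss's pointwise ergodic theorem, and judged that way it is an accurate roadmap rather than a proof. The soft parts are correct and genuinely routine: the mean ergodic theorem along any F\o lner sequence identifies the only possible limit $\E_\mu(f\mid\mathcal I)$; the telescoping identity for interval averages of a bounded coboundary $g-g\circ T$ gives everywhere convergence on a class which, together with the invariant functions, is dense in $L^2(\mu)$ and hence in $L^1(\mu)$; and the Banach principle converts a weak-type $(1,1)$ maximal inequality into almost everywhere convergence on all of $L^1(\mu)$.

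The genuine gap is that the weak-type $(1,1)$ maximal inequality is never proved, only described. That inequality \emph{is} the theorem: it is the only step where the tempered hypothesis enters, it is exactly what separates this statement from Birkhoff's theorem, and it cannot be taken for granted, since almost everywhere convergence is known to fail for general (non-tempered) F\o lner sequences of intervals. You correctly name the mechanism Lindenstrauss uses --- a covering lemma established by a random selection of translates with bounded expected overlap, followed by Calder\'on-type transference --- but you neither state the covering lemma, nor carry out the random selection, nor exhibit where the bound $\bigl|\bigcup_{k<N}(\Phi_N-\Phi_k)\bigr|\leq C|\Phi_N|$ is invoked to control the overlap. By your own words this is ``the genuinely hard and original ingredient,'' and a proof that defers precisely that ingredient is a reduction, not a proof. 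To close the gap you would need to formulate and prove the covering statement for shifted intervals $m+\Phi_N$ and run the transference argument; alternatively --- and this is what the paper itself does --- you may simply cite~\cite{L}, but then the attempt should be presented as a citation rather than as a proof.
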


\begin{corollary}
\label{cor:L}
Let $(X,T)$ be a topological dynamical system, $\mu$ an ergodic measure on $X$ and $\bPhi$ a tempered F\o lner sequence. Then $\mu$-almost every $x\in X$ is generic for $\mu$ along $\bPhi$.
\end{corollary}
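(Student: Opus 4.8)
The plan is to upgrade the almost-everywhere convergence for a single function, provided by Theorem~\ref{th:L}, to simultaneous convergence for all continuous functions on one common set of full measure; this is exactly what genericity along $\bPhi$ requires. The only real obstacle is the standard one of interchanging the quantifiers ``for every $f$'' and ``for $\mu$-almost every $x$'', and it is resolved by the separability of $\CC(X)$ together with a uniform bound on the averages.

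First I would fix a countable dense subset $\{f_k\colon k\geq 1\}$ of $\CC(X)$ for the uniform norm, which exists because $X$ is compact metric. For each $k$, Theorem~\ref{th:L} applied to $f_k\in L^1(\mu)$ (using that $\mu$ is ergodic) yields a set $X_k$ with $\mu(X_k)=1$ such that
$$
\frac{1}{|\Phi_N|}\sum_{n\in\Phi_N}f_k(T^nx)\to\int f_k\,d\mu\quad\text{as }N\to\infty
$$
for every $x\in X_k$. Setting $X_0=\bigcap_{k\geq 1}X_k$, we still have $\mu(X_0)=1$, and along $X_0$ the averages of every $f_k$ converge to the correct limit.

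It then remains to pass from this countable dense family to an arbitrary $f\in\CC(X)$. Here I would use that the averages are uniformly bounded: for every $N$ and every $x$,
$$
\Bigl|\frac{1}{|\Phi_N|}\sum_{n\in\Phi_N}f(T^nx)\Bigr|\leq\|f\|_\infty.
$$
Given $f$ and $\ve>0$, choose $f_k$ with $\|f-f_k\|_\infty<\ve/3$. For $x\in X_0$ the usual three-term estimate bounds $\bigl|\frac{1}{|\Phi_N|}\sum_{n\in\Phi_N}f(T^nx)-\int f\,d\mu\bigr|$ by $\|f-f_k\|_\infty$ (difference of the averages), plus the error for $f_k$ (which tends to $0$ since $x\in X_0\subset X_k$), plus $\|f-f_k\|_\infty$ (difference of the integrals); the first and third contributions are each below $\ve/3$, so the whole quantity is eventually below $\ve$. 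Hence the averages of $f$ converge to $\int f\,d\mu$ for every $x\in X_0$ and every $f\in\CC(X)$, which is precisely the statement that each such $x$ is generic for $\mu$ along $\bPhi$. This completes the argument, with $X_0$ serving as the full-measure set of generic points.
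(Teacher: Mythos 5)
Your proposal is correct and is essentially the paper's own proof: the paper also applies Theorem~\ref{th:L} to a countable subset of $\CC(X)$ that is dense in the uniform norm, leaving the intersection of the full-measure sets and the three-term approximation argument implicit. You have simply written out these routine details explicitly.
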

\begin{proof}
It suffices to apply Theorem~\ref{th:L} to all functions in a countable subset of $\CC(X)$, dense in $\CC(X)$ under the uniform norm.
\end{proof}

Returning to our construction, we use Lemma~\ref{lem:L} and replace the F\o lner sequence  with a tempered subsequence, still written $\bPhi$. 
We write
$$
X_1=\{x\in X\colon x\text{ is generic for $\mu$ along }\bPhi\}.
$$
 By Corollary~\ref{cor:L} we have $\mu(X_1)=1$.
We define also
$$
X_2\text{ is the topological support of }\mu.
$$
 We have $\mu(X_2)=1$. 
 Since the image of $\mu$ under $\pi$ is equal to $m_Z$ we have $\pi(X_2)=Z$.

We write
$$
\phi=\E_\mu(\one_E\mid Z)\text{ and } \psi=\one_E-\phi\circ\pi.
$$

We chose an open neighborhood $V$ of $0$ in $Z$ such that
\begin{equation}
\label{eq:continuous}
\int_Z|\phi(z)-\phi(z+\xi)|\,dm_Z(z)<\delta^2/4\text{ for every }\xi\in V
\end{equation}
and define
$$
X_3=\pi\inv(V+z_0).
$$
Note that for every $n$ we have $T^nx_0\in X_3$ if and only if $n\alpha\in V$.

$X_3$ is an open neighborhood of $x_0$ and, 
since $m_Z(V+z_0)>0$, we have $\mu(X_3)>0$ and we
can chose 
$$
x_1\in X_1\cap X_2\cap X_3.
$$
We write 
$$
z_1=\pi(x_1)\text{ and } \beta=z_1-z_0.
$$
Since  $x_1\in X_3$ we have $z_1\in V+z_0$ and 
$$
\beta\in V.
$$

\subsection{The joining $\nu$}
Replacing the F\o lner sequence $\bPhi$ by a subsequence we can assume that the point $(x_0,x_1)$ is generic along $\bPhi$ for a measure $\nu$ on $X\times X$. This measure is invariant under $T\times T$; since $x_0$ is generic for $\mu$, the image of $\nu$ under the first projection
is equal to $\mu$; since $x_1\in X_1$, $x_1$ is generic for $\mu$ too, and the second projection of $\nu$ is equal to $\mu$. We have that
$(X\times X,\nu,T\times T)$ is a joining of $(X,\mu,T)$ with itself.

Let $\lambda$ be the image of $\nu$ under the projection $\pi\times\pi\colon X\times X\to Z\times Z$. Then $(Z\times Z,\lambda,S\times S)$ is a joining of $(Z,m_Z,S)$ with itself.
Since $(x_0,x_1)$ is generic for $\nu$ along $\bPhi$, $(z_0,z_1)$ is generic for $\lambda$ along the same F\o lner sequence. Therefore, $\lambda$ is supported on a single closed orbit under $S\times S$ and, since $S\times S$ is  a translation on the compact abelian group $Z\times Z$, the measure $\lambda$ is ergodic under $S\times S$, and for $\lambda$-almost every $(z,z')$ we have $z'-z=z_1-z_0=\beta$. It follows that
for all $h,h'\in\CC(Z)$ we have 
\begin{equation}
\label{eq:lambda}
\int_{Z\times Z'} h(z)h'(z')\,d\lambda(z,z')=\int_Z h(z)h'(z+\beta)\,dm_Z(z).
\end{equation}
By density the same relation holds for $h,h'\in L^2(m_Z)$.

Let 
 $\theta\in L^2(\mu)$ be the image of $\one_E$ under the Markov operator associated to the joining $\nu$. This function is  characterized by
$$
\text{for every }h\in L^2(\mu),\ \int_{X\times X}h\otimes\one_E\,d\nu=
\int_X h\cdot\theta\,d\mu.
$$
For every $n$ we have
\begin{multline*}
\nu(T^{-n}E\times E)-\int_{X\times X}(S^n\phi\circ \pi)\otimes \one_E\,d\nu
=\int_{X\times X} T^n\psi\otimes\one_E\,d\nu\\
=\int_{X}T^n\psi\cdot \,\theta\,d\mu.\end{multline*}
Since $\E_\mu(\psi\mid Z)=0$, by the characterization~\eqref{eq:def-Kro} of the Kronecker factor
we have
\begin{equation}
\label{eq:density1}
\d^*\Bigl(\Bigl\{n\in\N\colon \Bigl|
\nu(T^{-n}E\times E)-\int_{X\times X}(S^n\phi\circ \pi)\otimes \one_E\,d\nu\Bigr|\geq\delta^2/8\Bigr\}\Bigr)=0.
\end{equation}
For every $n$, by definition of the measure $\lambda$  we have
\begin{multline*}
\int_{X\times X}(S^n\phi\circ\pi)\otimes \one_E \,d\nu
-\int_{X\times X}(S^n\phi)\otimes \phi\,d\lambda\\
=\int_{X\times X}(S^n\phi\circ\pi)\otimes \psi\,d\nu
=\int_{X\times X}(\phi\circ\pi)\otimes (T^{-n}\psi)\,d\nu.
\end{multline*}
Exchanging the role of the coordinates and proceeding as above we obtain
\begin{equation}
\label{eq:density2}
\d^*\Bigl(\Bigl\{n\in\N\colon
\Bigl|
\int_{X\times X}(S^n\phi\circ\pi)\otimes \one_E\,d\nu-\int_{Z\times Z}(S^n\phi)\otimes\phi\,
d\lambda\Bigr|\geq\delta^2/8\Bigr\}\Bigr)=0.
\end{equation}
Combining~\eqref{eq:density1} and~\eqref{eq:density2}  and using Formula~\eqref{eq:lambda} for $\lambda$ and formula~\eqref{eq:def-S} for $S$  we obtain that
$$
\d^*\Bigl(\Bigl\{n\in\N\colon \Bigl|\nu(T^{-n}E\times E)-\int_{Z}\phi(z+n\alpha)\phi(z+\beta) \,dm_Z(z)\Bigr|\geq\delta^2/4\Bigr\}\Bigr)=0.
$$
We use now the fact that $\beta\in V$, the definition of this set and the bound $\int\phi^2\,dm_Z\geq\delta^2$ and we obtain that the set
$$
\Lambda=\big\{n\in\N\colon n\alpha\in V\text{ and }\nu(T^{-n}E\times E)\leq\delta/4\bigr\}
$$
satisfies
$$
\d^*(\Lambda)=0.
$$

\subsection{Conclusion}
We check now that the point $x_1$ and the measure $\nu$ satisfy all the hypothesis needed to apply 
Theorem~\ref{th:one}. It remains only to build a good sequence $(m_i)$.

For every integer $i>0$ the set 
$$
X_3\cap \{ x\in X\colon d_X(x,x_1)<1/i\}
$$
 is an open neighborhood of $x_1$, and since $x_1$ belongs to the topological support $X_2$ of $\mu$, the measure of this set is $>0$ and thus the set
 $$
F_i=\{n\in\N\colon T^nx_0\in X_3\text{ and }d_X(T^nx_0,x_1)<1/i\}
$$
satisfies
$\ds_\bPhi(F_i)>0$. Since $\ds_\bPhi(\Lambda)=0$, $\ds_\bPhi(F_i\setminus\Lambda)>0$ and in particular $F_i\setminus\Lambda$ is infinite. We thus can chose a sequence of integers $(m_i)$, tending to $\infty$, with
$m_i\in F_i\setminus\Lambda$ for every $i$. We have that 
$$T^{m_i}x_0\to x_1\text{ as }i\to\infty$$
and, for every $i$, 
\begin{gather*}
T^{m_i}x_0\in X_3\text{ and thus }m_i\alpha\in V;\\
m_i\notin\Lambda\text{ and thus } \nu(T^{-m_i}E\times E)>\delta^2/4.
\end{gather*}

All the hypothesis of Theorem~\ref{th:one} are satisfied with $\ve=\delta^2/4$ and we are done.
\qed

\section{Proof of Proposition~\ref{prop:counterexample}}
\label{sec:counterexample}

Let $(X,T)$ be an uniquely ergodic topological dynamical system with invariant measure $\mu$ and assume that the measure preserving system $(X,\mu,T)$ is mixing. Let $U\subset X$ be an open subset with $0<\mu(U)\leq\mu(\overline U)<1$, $x_0\in X$ and 
$$
A=\{n\in\N\colon T^nx_0\in U\}.
$$

Since $\mu(U)>0$ and by unique ergodicity, $\d^*(A)>0$. We show that it does not contain a sum $B+C$ with $\d^*(B)>0$  and $C$ infinite.

Assume by contradiction that $B$ and $C$ exist withe these properties and let 
$$
K=\overline{\{T^nx_0\colon n\in B\}}.
$$
By unique ergodicity,
$$
\mu(K)\geq\d^*(\{n\in\N\colon T^nx_0\in K\})\geq \d^*(B)>0.
$$
On the other hand, for every $c\in C$ we have $T^c\{T^bx_0\colon b\in B\}\subset U$ and thus 
$$
\text{for every }c\in C,\quad T^cK\subset\overline U.
$$

Since $(X,\mu,T)$ is mixing and since $K$ and $X\setminus\overline U)$ have positive measure, for every sufficiently large $n$ we have 
$\mu(T^nK\cap(X\setminus\overline U))>0$ and thus $T^nK\not\subset \overline U$. We have a contradiction. \qed

\bibliographystyle{amsplain}


\begin{dajauthors}
\begin{authorinfo}[bh]
  Bernard Host\\
  Professor\\
  LAMA, Universit\'e Paris-Est Marne-la-Vall\'ee, France.
  \\
  bernard.host\imageat{}u-pem\imagedot{}fr \\
\end{authorinfo}

\end{dajauthors}

\end{document}